\newcommand{\imod}[1]{\allowbreak\mkern4mu({\operator@font mod}\,\,#1)}
\newtheorem{propo}{Proposition}
\newtheorem{lemma}[propo]{Lemma}
\newtheorem{corollary}[propo]{Corollary}
\newtheorem{theor}[propo]{Theorem}
\newcommand{\Ker}{\operatorname{Ker}}
\newcommand{\Aut}{{\mathrm {Aut}}}
\newcommand{\Out}{{\mathrm {Out}}}
\newcommand{\Irr}{{\mathrm {Irr}}}
\newcommand{\Syl}{{\mathrm {Syl}}}
\newcommand{\ZZ}{{\mathbb Z}}
\newcommand{\FF}{{\mathbb F}}
\newcommand{\GC}{{\underline{\boldsymbol{G}}}}
\newcommand{\ZB}{\mathbf{Z}}
\newcommand{\CB}{\mathbf{C}}
\newcommand{\NB}{\mathbf{N}}
\newcommand{\OB}{\mathbf{O}}
\newcommand{\lam}{\lambda}
\newcommand{\al}{\alpha}
\newcommand{\GL}{\mathrm {GL}}
\newcommand{\SL}{\mathrm {SL}}
\newcommand{\PSL}{\mathrm {PSL}}
\newcommand{\GU}{\mathrm {GU}}
\newcommand{\SU}{\mathrm {SU}}
\newcommand{\Sp}{\mathrm {Sp}}
\newcommand{\SO}{\mathrm {SO}}
\newcommand{\GO}{\mathrm {GO}}
\newcommand{\Spin}{{\mathrm{Spin}}}
\newcommand{\tw}[1]{{}^#1\!}
\newcommand{\St}{\mathsf {St}}
\begin{document}

\title{An extension of Gow's theorem}

\author{Robert M. Guralnick}
\address{Department of Mathematics, University of Southern California,
Los Angeles, CA 90089-2532, USA}
\email{guralnic@math.usc.edu}

\author{Pham Huu Tiep}
\address{Department of Mathematics\\ 
Rutgers University\\ Piscataway, NJ 08854\\    
U. S. A.} 
\email{tiep@math.rutgers.edu}

\keywords{}
\subjclass[2010]{20C15, 20C33, 20G40}

\thanks{The first author was partially supported by the NSF
(grant DMS-1901595) and a Simons Foundation Fellowship 609771. The second author was partially supported by the NSF (grant
DMS-2200850) and the Joshua Barlaz Chair in Mathematics.}

\dedicatory{In memory of our good friend and esteemed colleague Gary Seitz} 

\begin{abstract}
We extend Gow's theorem to finite groups $G$ whose generalized Fitting subgroup is $\ZB(G)S$ for a quasisimple Lie-type group $S$ of simply connected type in characteristic $p$, and whose center $\ZB(G)$ has $p'$-order.
\end{abstract}

\maketitle

A result of Rod Gow \cite[Theorem 2]{Gow} asserts that the product $a^Gb^G$ of any two regular semisimple classes in a finite simple group 
of Lie type $G$ contains every nontrivial semisimple element $x \in G$. This result has been used in many applications. It has also been 
extended to any quasisimple Lie-type group of 
simply connected type: the product $a^Gb^G$ of any two regular semisimple classes in $G$ contains every non-central semisimple element $x \in G$, 
see \cite[Lemma 5.1]{GT}.

In this paper we will further extend Gow's theorem. Let $p$ be a prime and let $\GC$ be a simple, simple connected algebraic group defined over 
$\overline{\FF}_p$. Let $F:\GC \to \GC$ be a Steinberg endomorphism, so that 
$$S:=\GC^F$$
be quasisimple. (In particular, we do not view $\PSL_2(9)$ as $\Sp_4(2)'$, $\SU_3(3)$ as $G_2(2)'$, and 
$\SL_2(8)$ as $\tw2 G_2(3)'$.) 

We will consider finite groups $G$ with
\begin{equation}\label{eq10}
  F^*(G)=\ZB(G)S \mbox{ and }p \nmid |\ZB(G)|,
\end{equation}  
(so $\CB_G(S) = \CB_G(F^*(G))=\ZB(G)\ZB(S)$ is a $p'$-group), 
and aim to show that the product $a^Gb^G$ of certain two conjugacy classes in $G$ will cover all elements $g \in G$ of certain kind. 
Before going on we state a special case of a consequence of our main result which is less technical.  Versions of this result have already been used in
\cite{AGS, GTT}.

\begin{corollary} \label{commutator}   Let $G$ be as above.   Assume that  $a \in G$ has order prime to $p$,  $|\CB_S(a)|$ has order prime to $p$ and that 
$s \in S \smallsetminus \ZB(S)$ is semisimple.
Then $s^t = [a,u]$ for some $t, u \in S$.
\end{corollary}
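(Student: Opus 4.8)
The plan is to recast the assertion as a product of two conjugacy classes and then invoke (an extension of) Gow's theorem. Note first that, for $u,t\in S$, the equation $s^t=[a,u]=a^{-1}a^u$ is equivalent to $a\,s^t\in a^S$, and hence, since the set $(a^{-1})^S\cdot a^S$ is stable under $S$-conjugacy, to the single condition $s\in(a^{-1})^S\cdot a^S$: indeed if $s=(a^{-1})^v a^w$ with $v,w\in S$, then conjugating by $v^{-1}$ gives $s^{v^{-1}}=a^{-1}a^{wv^{-1}}=[a,wv^{-1}]$. Note also that $[a,u]$ (for $u\in S$), the order of $a$, and $\CB_S(a)$ depend only on the automorphism $\alpha$ that $a$ induces on $S$, and are unchanged when $a$ is multiplied by an element of $\CB_G(S)\supseteq\ZB(G)$; so we may replace $a$ by any $p'$-element inducing $\alpha$, and in particular assume $G=\langle S,a\rangle$.

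Second, I would pin down $\alpha$. If $\alpha$ had a nontrivial field or graph component, its fixed-point subgroup in $S$ would contain a finite group of Lie type of positive rank over $\overline{\FF}_p$, whose order is divisible by $p$; this is impossible, so $\alpha$ is inner-diagonal. Fix a connected reductive group $\GC^{+}$ over $\overline{\FF}_p$ with $[\GC^+,\GC^+]=\GC$ and connected center (for instance $\GL_n\supseteq\SL_n$); then $S=[\GC^+,\GC^+]^F$ is normal in $\Gamma:=\GC^{+F}$ with $|\Gamma:S|$ prime to $p$, and $\Gamma$ induces on $S$ exactly the inner-diagonal automorphisms. Thus $\alpha=\mathrm{int}_a$ for a $p'$-element $a\in\Gamma$, and $|\CB_S(a)|=|\CB_{\GC}(a)^F|$ being prime to $p$ forces the connected reductive group $\CB_{\GC}(a)$ to be a maximal torus, i.e. $a$ is regular semisimple in $\Gamma$. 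After this reduction the task reads: given $a\in\Gamma$ regular semisimple and $s\in S\smallsetminus\ZB(S)$ semisimple, show $s\in(a^{-1})^S\cdot a^S$.

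If $a\in S$ this is precisely the quasisimple form of Gow's theorem, \cite[Lemma 5.1]{GT}, applied to the two regular semisimple $S$-classes $(a^{-1})^S$ and $a^S$. The real case is $a\in\Gamma\smallsetminus S$, where one needs the analogue of Gow's theorem for the coset $aS$. Here I would use the standard class-algebra/character-sum machinery: express $\#\{\widetilde s\in s^S:\ a\widetilde s\in a^S\}$ (equivalently, up to a positive factor, $\#\{u\in S:\ [a,u]\in s^S\}$) as a weighted sum over irreducible characters — of $S$, or of $S\langle a\rangle$ — of terms essentially of the shape $|\chi(a)|^2\,\overline{\chi(s)}/\chi(1)$, extract the positive main term contributed by the trivial and linear characters (together with the Steinberg-type characters, as in Gow's original argument), and dominate the remaining sum using the strong upper bounds on $|\chi(a)|$ available because $a$ is regular semisimple, together with the upper bound on $|\chi(s)|/\chi(1)$ valid for the non-central semisimple $s$; positivity of the resulting expression then yields the desired $\widetilde s$.

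I expect the principal obstacle to be twofold. First, the conjugating elements $u,t$ must lie in $S$ even though $a$ need not — equivalently, inner-diagonal automorphisms fuse $S$-classes — so one must ensure that the given class $s^S$ itself, not merely some $\Gamma$-conjugate of it, is reached; this is why the character count has to be carried out with the characters of $S$ (or of $S\langle a\rangle$) via Clifford theory rather than naively inside $\Gamma$, and this is where the substantive estimates live. Second, the error term in the character sum must be controlled uniformly over all Lie types and all ranks, with the finitely many small groups and those with exceptional Schur multipliers treated by hand. This uniform estimate is exactly the content that the paper's main theorem isolates, and the corollary follows from it by the reductions above.
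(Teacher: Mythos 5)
Your opening reductions are correct and are precisely the ones the paper makes: $s^t=[a,u]$ for some $t,u\in S$ is equivalent to $s\in(a^{-1})^S\,a^S$, and one may replace $G$ by $\langle S,a\rangle$ (so that $b:=a^{-1}$ and $g:=s$ satisfy \eqref{for-a}--\eqref{for-g}, with $a^G=a^S$ and $b^G=b^S$). After that, however, the proposal diverges from the paper and does not close.

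The detour through inner-diagonal automorphisms and the ambient group $\GC^+\supseteq\GC$ with connected center is unnecessary. Once one has $G=\langle S,a\rangle$, the quotient $G/S$ is cyclic, so conditions (a) and (b) of Theorem~\ref{main1} are automatic, condition (c) holds since $s\in S\smallsetminus\ZB(S)$ gives $s\notin\ZB(G)\ZB(S)$ (as $\ZB(G)\ZB(S)\cap S=\ZB(S)$), and the corollary is an immediate invocation of Theorem~\ref{main1}. Nothing about the type of automorphism $a$ induces, and no case split between $a\in S$ and $a\notin S$, is needed. (Your claim that a field or graph component of $\alpha$ forces $p\mid|\CB_S(a)|$ is plausible but would itself need justification, and is a distraction here.)

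The substantive gap is in the final step. You propose to establish nonvanishing of the class-algebra sum by ``extracting the positive main term'' from the trivial, linear, and Steinberg-type characters and then ``dominating the remaining sum'' using strong upper bounds on $|\chi(a)|$ for $a$ regular semisimple. This is not the mechanism in either Gow's original proof or in Theorem~\ref{main1}, and you do not carry it out; nor would it be straightforward to carry out uniformly, since the required character bounds are delicate and the error terms are not small in general. The actual argument is $p$-adic, not analytic: Proposition~\ref{sum-1} shows that the Steinberg-block contribution $\Sigma_1$ is a rational integer with $p$-part at most $p^{d+e}$, while Proposition~\ref{sum-2} uses Humphreys' theorem (all non-Steinberg characters lie in blocks of maximal defect), the block congruence $\omega_{\chi_2}(g)\equiv\omega_{\chi_0}(g)\pmod p$ with a height-zero $\chi_0$, and the $p$-local fact $p\mid[S:\CB_S(g)]$ from Proposition~\ref{cent}, to show $\Sigma_2$ is $p^{d+e+1}$ times an algebraic integer. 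The sum $\Sigma_1+\Sigma_2$ is then nonzero by a valuation argument, with no estimates on character values at all. As written, your proposal acknowledges that the ``uniform estimate'' must come from the main theorem, but it mischaracterizes what that theorem does, and leaves the crucial nonvanishing unproved.
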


In fact, one need not assume that $g$ has order prime to $p$.   See  our main result Theorem \ref{main1}.  

Setting $G_1 :=  \langle S,a,b \rangle$, we have that $S \lhd G_1$, and hence $S \lhd F^*(G_1)$. By \eqref{eq10}, $F(G_1)$, as well as any other than $S$ 
semisimple normal subgroup of $G_1$, are contained in 
$$\ZB(G)\ZB(S) \cap G_1 \leq \ZB(G_1)S$$ 
as they centralize $S$. It follows that 
$F^*(G_1) = \ZB(G_1)S$. Furthermore, 
$$\ZB(G_1) \leq \CB_G(S)=\ZB(G)\ZB(S)$$ 
is also a $p'$-subgroup. Hence, 
for our purposes, we may assume 
\begin{equation}\label{eq11}
  G = \langle S,a,b \rangle.
\end{equation}   
%Certainly, $g \in G$ can belong to $a^Gb^G$ only when it does so in the solvable group $G/S$, so we will assume
%\begin{equation}\label{for-g1}
%  gS \in (aS)^{G/S}(bS)^{G/S}.
%\end{equation}
%For instance, if $G/S$ is abelian, then \eqref{eq12} is equivalent to $g \in abS$.  

Let $\St$ denote the Steinberg character of $S$. By \cite[Corollary D]{F1}, $\St$ extends to a rational-valued character $\St_G$ of $G$ (called 
the {\it basic $p$-Steinberg character} of $G$). Moreover,
by \cite[Theorem C]{F2}, there is a Sylow $p$-subgroup $P$ and a $p$-subgroup $D$ of $G$, of order
$$|D|=p^d=|G/S|_p,$$
such that $P = Q \rtimes D$ for a Sylow $p$-subgroup 
$Q$ of $S$ and the following statement holds. For any element $x \in G$, $\St_G(x) \neq 0$ if and only if $x_p \in D$ (up to conjugation)
for the $p$-part of $x$, in which case
$$\St_G(x) = \pm |\CB_S(x)|_p.$$
In view of these results, the proper generalization to $G$ of regular semisimple classes in $S$ will be that $a,b \in G$ satisfy
\begin{equation}\label{for-a}
  a_p \in D \mbox{ up to conjugation in }G, \mbox{ and }p \nmid |\CB_S(a)|,
\end{equation} 
and
\begin{equation}\label{for-b}
  b_p \in D \mbox{ up to conjugation in }G, \mbox{ and }p \nmid |\CB_S(b)|,
\end{equation} 
Certainly, $g \in G$ can belong to $a^Gb^G$ only when it does so in the solvable group $G/S$, so we will assume
\begin{equation}\label{for-g}
  gS \in (aS)^{G/S}(bS)^{G/S}, \mbox{ and }g_p \in D \mbox{ up to conjugation in }G.
\end{equation}
For instance, if $G/S$ is abelian, then the first condition in \eqref{for-g} is equivalent to $g \in abS$.  

\begin{propo}\label{cent}
The following statements hold.
\begin{enumerate}[\rm(i)]
\item If $p > 2$ and $Q \in \Syl_p(S)$, then $\CB_G(Q) = \ZB(G)\ZB(S)\ZB(Q)$.
\item If $g \in G \smallsetminus \ZB(G)\ZB(S)$ and $g_p \in D$, then $p$ divides $[S:\CB_S(g)]$. 
\end{enumerate}
\end{propo}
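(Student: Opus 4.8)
The plan is to deduce (ii) from (i) in a few lines, so the substance is (i). Fix once and for all an $F$-stable Borel subgroup $\uB = T\,U$ of $\GC$, with $T$ an $F$-stable maximal torus and $U = \RC_u(\uB)$ its unipotent radical. Since the Sylow $p$-subgroups of $S = \GC^F$ are exactly the groups $U^F$, all $S$-conjugate, and since $\ZB(G)$ and $\ZB(S)$ are normal in $G$, it is enough to prove (i) for $Q = U^F$. The first step is to identify $\NB_S(Q)$, for which I claim $\NB_S(Q) = \uB^F$: one inclusion is clear, and for the other the Borel--Tits theorem gives a proper $F$-stable parabolic $\PC$ with $Q = U^F \leq \RC_u(\PC)$ and $\NB_S(Q) \leq \PC^F$, and since $|\RC_u(\PC)^F| \leq |Q|$ with equality only when $\PC = \uB$, we get $\PC = \uB$.

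Next I would compute $\CB_S(Q)$. The inclusion $\ZB(S)\ZB(Q) \subseteq \CB_S(Q)$ is immediate. Conversely $\CB_S(Q) \leq \NB_S(Q) = \uB^F = T^F \ltimes Q$, so any $x \in \CB_S(Q)$ has the form $x = tu$ with $t \in T^F$, $u \in Q$. Conjugation by $x$ is trivial on $Q$, hence on $Q/[Q,Q]$; conjugation by $u$ is trivial there too; so conjugation by $t$ is trivial on $Q/[Q,Q]$. As $Q/[Q,Q]$ detects every simple root subgroup and $t$ acts on $U_\alpha$ as the scalar $\alpha(t)$, this forces $\alpha(t) = 1$ for every simple root $\alpha$, whence $t \in \bigcap_\alpha \Ker \alpha = \ZB(\GC)$ (the simple roots span the root lattice), so $t \in \ZB(\GC)^F = \ZB(S)$; then $u = t^{-1} x \in \ZB(Q)$, and $x \in \ZB(S)\ZB(Q)$. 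Thus $\CB_S(Q) = \ZB(S)\ZB(Q)$.

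For the passage from $S$ to $G$, take $x \in \CB_G(Q)$ and let $\theta \in \Aut(S)$ be conjugation by $x$. Then $\theta$ centralizes $Q$, hence normalizes $\NB_S(Q) = \uB^F$. By the standard structure of $\Aut(S)$, every automorphism of $S$ stabilizing the Borel $\uB^F$ is the composite of an inner-diagonal automorphism given by conjugation by an element of $\uB_{\mathrm{ad}}^F$ (the Borel of the adjoint group $\GC_{\mathrm{ad}}^F$ built from $\uB$) with a product of the ``standard'' field and graph automorphisms stabilizing $\uB$; so $\theta = \iota_g \circ \psi$ with $g \in \uB_{\mathrm{ad}}^F$ and $\psi$ of this field/graph type. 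I then compare actions on $Q/[Q,Q]$: $\iota_g$ acts through the adjoint torus, i.e. diagonally on the simple-root summands; a nontrivial graph automorphism permutes those summands nontrivially; a nontrivial field automorphism acts on them by a nontrivial Frobenius twist, which is not a diagonal scaling. Since $\theta$ is trivial on $Q/[Q,Q]$, it follows that $\psi = 1$. (This is where $p > 2$ is used: it rules out the exceptional graph automorphisms in characteristic $2$ of types $B_2$ and $F_4$, together with the small-characteristic anomalies in the structure of $U$ and of $\ZB(U)$, which would otherwise require separate treatment.) Hence $\theta = \iota_g$; rerunning the computation of the previous paragraph inside $\GC_{\mathrm{ad}}$, whose center is trivial, forces $g \in \ZB(U^F) = \ZB(Q) \subseteq S$. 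Then $xg^{-1}$ centralizes $S$, i.e. $xg^{-1} \in \CB_G(S) = \ZB(G)\ZB(S)$, so $x \in \ZB(G)\ZB(S)\ZB(Q)$; with the trivial reverse inclusion this proves (i).

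Finally, for (ii), I would argue by contradiction: suppose $p \nmid [S : \CB_S(g)]$. Then a Sylow $p$-subgroup of $\CB_S(g)$ has order $|Q|$, hence is a Sylow $p$-subgroup $Q'$ of $S$, so $g \in \CB_G(Q') = \ZB(G)\ZB(S)\ZB(Q')$ by (i). This last group is the internal direct product of the $p'$-group $\ZB(G)\ZB(S) = \CB_G(S)$ and the $p$-group $\ZB(Q') \subseteq S$ (they commute and intersect trivially), so $g_p \in \ZB(Q') \subseteq S$; moreover $g_p \neq 1$, since otherwise $g = g_{p'} \in \ZB(G)\ZB(S)$, contrary to hypothesis. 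But some $G$-conjugate of $g_p$ lies in $D$, and since $S \lhd G$ that conjugate lies in $D \cap S \leq (P \cap S) \cap D = Q \cap D = 1$ (as $P = Q \rtimes D$) --- a contradiction. Hence $p \mid [S : \CB_S(g)]$. The main obstacle in all this is the third paragraph --- showing $\psi = 1$ and dealing with the exceptional small-rank and small-characteristic groups --- which is presumably exactly what the hypothesis $p > 2$ is there to control; the deduction of (ii) and the $p'$-part of the computation in (i) are routine.
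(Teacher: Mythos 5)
Your argument for part (i) takes a genuinely different route from the paper's. You work directly with the Lie-theoretic data: an $F$-stable Borel $\uB=TU$, the $T^F$-action on $Q/[Q,Q]$ through simple-root characters, and the decomposition of $\Aut(S)$ into inner-diagonal, field and graph parts, with Borel--Tits supplying $\NB_S(Q)=\uB^F$. The paper instead passes immediately to the almost simple quotient $\bar G=G/\ZB(G)\ZB(S)$ and invokes two general group-theoretic facts: $\OB_p(\NB_H(R))=F^*(\NB_H(R))$ for $R\in\Syl_p(H)$ (GLS, Cor.~3.1.4), which simultaneously shows that $\CB_{\bar G}(\bar Q)$ is a $p$-group and that $\CB_{\bar S}(\bar Q)=\ZB(\bar Q)$; and, for $p>2$, the Glauberman--Guralnick--Lynd--Navarro theorem that $\ZB(R)\leq F^*(H)$ whenever $\OB_{p'}(H)=1$, which forces $\CB_{\bar G}(\bar Q)\leq\bar S$. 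Your route is more explicit about where $p>2$ is used (excluding the exceptional $B_2$, $F_4$ graph automorphisms in characteristic $2$), but to be airtight it needs the $T^F$-action computation redone for the twisted and very twisted groups ($\tw 2 A_n$, $\tw 2 D_n$, $\tw 3 D_4$, $\tw 2 E_6$, and especially $\tw 2 G_2(3^{2m+1})$, which has $p=3>2$): "$t$ acts on $U_\alpha$ by $\alpha(t)$'' is a split-torus statement, and the root-subgroup structure of $Q$ for the twisted types is more delicate. The paper's soft argument sidesteps all of that uniformly.

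The real gap is in part (ii). Your deduction of (ii) from (i) is correct and coincides with the paper's --- but it only applies when $p>2$, since it relies on (i). Part (ii) is asserted for every $p$, and it is needed for $p=2$ downstream (Proposition~\ref{sum-2} invokes it with no parity restriction to get $p\mid|g^S|$). For $p=2$ the paper gives a separate argument via the basic $p$-Steinberg character: from $p\nmid[S:\CB_S(g)]$ and Feit's value formula one gets $|\St_G(g)|=\St_G(1)$, so $g$ lies in the generalized center $\ZB(\St_G)$; since $\ZB(G)\ZB(S)\leq\ZB(\St_G)$ as well, $\ZB(\St_G)/Z$ is a nontrivial normal subgroup of the almost simple group $G/Z$, hence contains $\bar S$, which is absurd because $\St$ vanishes on $p$-singular elements of $S$. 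Without some replacement for this, your proof of (ii) is incomplete at $p=2$.
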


\begin{proof}
Since $Z:=\ZB(G)\ZB(S)$ is a $p'$-group centralizing $Q$ and normal in $G$, we may work in $\bar{G}:=G/Z$ and identify 
$Q$ with $\bar{Q}:=QZ/Z$. Note that $Z \cap S = \ZB(S)$. Moreover, as $S$ is perfect, we have
$\CB_G(S/\ZB(S)) = \CB_G(S) = Z$. It follows that 
$$\bar{S}:=S/\ZB(S) \lhd \bar{G} \leq \Aut(\bar{S}),$$ 
i.e. $\bar{G}$ is almost simple. 
%and so $$\OB_{p'}(\bar{G})=1.$$

Consider any element $x \in \bar{C}:=\CB_{\bar{G}}(\bar{Q})$. Then $H:= \langle \bar{S},x \rangle \leq \bar{G}$ is also almost simple,
whence $\OB_{p'}(H)=1$, and $R:=\langle \bar{Q},x_p \rangle$ is a Sylow $p$-subgroup of $H$ centralized by $x$.  It follows 
that $R=\OB_p(\NB_H(R))$. By \cite[Corollary 3.1.4]{GLS}, $\OB_p(\NB_H(R)) = F^*(\NB_H(R))$, whence $x$ belongs to
$\CB_{\NB_H(R)}(R) \leq R$ and so $x=x_p$ is a $p$-element. Thus $\bar{C}$ is a $p$-group.

Similarly, $\bar{Q}=\OB_p(\NB_{\bar{S}}(\bar{Q})) = F^*(\NB_{\bar{S}}(\bar{Q}))$, and so
$$\bar{C} \cap \bar{S} =\CB_{\bar{S}}(\bar{Q}) = \CB_{\NB_{\bar{S}}(\bar{Q})}(\bar{Q}) \leq \bar{Q}.$$
It follows that $\bar{C} \cap \bar{S} = \ZB(\bar{Q})$.

\smallskip
(i) Now we assume $p > 2$ and show that $\bar{C} \leq \bar{S}$, which implies that $\CB_G(Q)=\ZB(Q)Z$.
Assume the contrary: $\bar{C} \not\leq \bar{S}$. Since $\bar{C}$ is a $p$-group, we can find 
a $p$-element $x \in \bar{C} \smallsetminus \bar{S}$; in particular, $[x,\bar{Q}]=1$. Now $R:=\langle x,\bar{Q} \rangle$ is 
Sylow $p$-subgroup of $H := \langle \bar{S},x \rangle$ and $x \in \ZB(R)$. As $\bar{S} \lhd H \leq \bar{G} \leq \Aut(\bar{S})$, we still have 
$\OB_{p'}(H)=1$. Now, if $p > 2$ then $\ZB(R) \leq F^*(H) = \bar{S}$ by \cite[Corollary 1.2]{GGLN}, and hence $x \in \bar{S}$, contrary to
the choice of $x$.

\smallskip
(ii) Assume the contrary that $p \nmid [S:\CB_S(g)]$. Conjugating $g$ suitably, we may assume that $g \in \CB_G(Q)$ with
$Q \in \Syl_p(S)$ as before.

Suppose first that $p>2$. Then $g \in \ZB(G)\ZB(S)\ZB(Q)$ by (i), and so $g_p \in S$. But $g_p$ is conjugate to an element in $D$ by assumption and
$D \cap S = 1$, so $g_p = 1$. It follows that $g \in \ZB(G)\ZB(S)$, a contradiction.

Thus we have $p=2$. Then $\St_G(1)=|Q|=|\CB_S(g)|_p =  \pm \St_G(g)$. 
On the other hand, $\St$ is trivial at $\ZB(S)$, 
%and $\St_G$ is an extension of 
%$\St$ to the almost simple group $\bar{G}$, whose socle is $\bar{S}$. 
so the generalized center of $\St_G$ contains $Z=\ZB(G)\ZB(S)$ and hence equals $G$ as $G/Z$ is almost simple with socle $\bar{S}$.
As the generalized center of $\St_G$ contains $g$, we conclude that $g \in Z$, again a contradiction.
\end{proof}

Fix any element $g \in G$ satisfying \eqref{for-g}. Then $S\CB_G(g) \leq G$, so
\begin{equation}\label{ratio}
  \ZZ \ni [G:S\CB_G(g)] = \frac{|G| \cdot |\CB_S(g)|}{|S| \cdot |\CB_G(g)|} = \frac{[G:\CB_G(g)]}{[S:\CB_S(g)]}.
\end{equation}  
Write
\begin{equation}\label{ratio2}
  \frac{[G:\CB_G(g)]_p}{[S:\CB_S(g)]_p} = p^e.
\end{equation}   

\begin{lemma}\label{xyz}
Let $X$ be a finite group, which is abelian-by-cyclic, that is, 
$X$ has a normal abelian subgroup $A \lhd X$ such that $X/A$ is cyclic. Suppose $x,y,z \in X$ are such that
$$X = \langle x,y\rangle \mbox{ and }z \equiv xy \pmod{[X,X]}.$$
Then 
$$\sum_{\alpha \in \Irr(X)}\frac{\alpha(x)\alpha(y)\overline{\alpha(z)}}{\al(1)} = |X/[X,X]|.$$ 
\end{lemma}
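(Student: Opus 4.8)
The plan is to translate the character identity into a counting problem via the class-multiplication constants, and then resolve that count using the $\ZZ[t]/(t^d-1)$-module structure carried by an abelian normal subgroup of $X$. Recall the standard fact that for any finite group $X$ and any $x,y,z\in X$, the number $n$ of pairs $(u,v)\in x^X\times y^X$ with $uv=z$ equals $\frac{|x^X|\,|y^X|}{|X|}\sum_{\alpha\in\Irr(X)}\frac{\alpha(x)\alpha(y)\overline{\alpha(z)}}{\alpha(1)}$. Hence the assertion of the lemma is equivalent to the purely group-theoretic identity $n=|x^X|\,|y^X|/|[X,X]|$, and it is this that I would prove. (The right-hand side is an integer because $X/[X,X]$ is abelian, so $x^X\subseteq x[X,X]$ and $y^X\subseteq y[X,X]$.)

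Next I would introduce the module picture: choose an abelian $A\lhd X$ with $X/A=\langle\bar t\rangle$ cyclic of order $d$, lift $\bar t$ to $t\in X$, and view $A$ as a module over $R:=\ZZ[t]/(t^d-1)$ via conjugation; a direct computation then gives $N:=[X,X]=(1-t)A$. Write $x=a_0t^{k_0}$, $y=a_1t^{k_1}$, $z=a_2t^{k_2}$ with $a_i\in A$ and $0\le k_i<d$, and set $M_k:=(1-t^k)A$, a submodule of $N$. Conjugating $x$ by $bt^j$ produces an element whose $t$-part is again $t^{k_0}$ and whose $A$-part is $(1-t^{k_0})b+t^ja_0$; consequently the set of $A$-parts occurring among the elements of $x^X$ is a union of exactly $o_0$ distinct cosets of $M_{k_0}$ — where $o_0$ is the length of the $\langle t\rangle$-orbit of $a_0+M_{k_0}$ in $A/M_{k_0}$ — and $|x^X|=o_0|M_{k_0}|$ (and symmetrically $|y^X|=o_1|M_{k_1}|$). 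The hypothesis $X=\langle x,y\rangle$ forces $\gcd(k_0,k_1,d)=1$, and a short ideal computation in $R$ upgrades this to $M_{k_0}+M_{k_1}=N$; the hypothesis $z\equiv xy\pmod{N}$ unwinds, once the ``carry'' $t^d\in A$ occurring when $k_0+k_1\ge d$ is accounted for on both sides, into a congruence relating $a_2$ to $a_0+a_1$ modulo $N$.

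Now the count. Fixing $u\in x^X$ by its $A$-part $p$ and then solving $uv=z$ with $v\in y^X$ determines $v$ uniquely, and one checks the set of admissible $p$ is exactly $P_1\cap P_2$, where $P_1$ is the union of the $o_0$ cosets of $M_{k_0}$ above and $P_2$ is, after a translation, a union of $o_1$ cosets of $M_{k_1}$; moreover the congruence on $a_2$ is precisely what guarantees that $P_1$ and $P_2$ both lie inside one and the same coset of $N$. Since $M_{k_0}+M_{k_1}=N$, within that $N$-coset every coset of $M_{k_0}$ meets every coset of $M_{k_1}$, and does so in a coset of $M_{k_0}\cap M_{k_1}$; hence $n=|P_1\cap P_2|=o_0o_1\,|M_{k_0}\cap M_{k_1}|=o_0o_1\,|M_{k_0}|\,|M_{k_1}|/|N|=|x^X|\,|y^X|/|N|$, which is what was required.

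The step I expect to be the main obstacle is the bookkeeping in the second paragraph: handling the case where the extension $A\to X\to X/A$ does not split, and carefully tracking the carry term $t^d\in A$ in both the product $xy$ and the products $uv$, so as to be sure that $P_1$ and $P_2$ genuinely lie in a common coset of $N$ — this is the one place where both hypotheses $X=\langle x,y\rangle$ and $z\equiv xy\pmod{N}$ are really used. The remaining ingredients are elementary module theory over $R=\ZZ[t]/(t^d-1)$ and the class-constant formula quoted at the start.
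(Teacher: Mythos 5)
Your proof is correct, but it takes a genuinely different route from the paper. The paper argues directly at the level of characters: for a nonlinear $\alpha \in \Irr(X)$, it looks at an irreducible constituent $\lambda$ of $\alpha|_A$; if $\lambda$ is not $X$-invariant then (since $X = \langle x,y\rangle$ and $A$ stabilizes $\lambda$) $\lambda$ fails to be $x$- or $y$-invariant, whence $\alpha(x)=0$ or $\alpha(y)=0$ by Clifford theory; if $\lambda$ is $X$-invariant, then $\Ker\lambda \lhd X$ with $A/\Ker\lambda$ central in $X/\Ker\lambda$ and $X/A$ cyclic, forcing $X/\Ker\lambda$ abelian and $\alpha$ linear, a contradiction. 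Thus only the linear characters contribute, giving $|X/[X,X]|$ at once. Your proof instead translates the sum into the class-multiplication count $n$ of pairs $(u,v)\in x^X\times y^X$ with $uv=z$, and evaluates $n$ by structural module theory over $\ZZ[t]/(t^d-1)$, using $[X,X]=(1-t)A$, the description of $x^X$ and $y^X$ as unions of cosets of $M_{k_0}=(1-t^{k_0})A$ and $M_{k_1}$, the ideal identity $M_{k_0}+M_{k_1}=N$ forced by $\gcd(k_0,k_1,d)=1$, and the observation that $z\equiv xy\pmod N$ places the two coset-unions in a common $N$-coset. That gives $n=|x^X|\,|y^X|/|[X,X]|$, which is equivalent to the stated identity. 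The paper's argument is considerably shorter and purely character-theoretic; yours is longer but yields the stronger combinatorial statement that $z$ has exactly $|x^X|\,|y^X|/|[X,X]|$ factorizations $uv$ with $u\in x^X$, $v\in y^X$. (One small remark: both arguments actually use only the weaker hypothesis $\langle x,y\rangle A = X$ — in the paper's proof because $A$ already stabilizes $\lambda$, in yours because you only need $\gcd(k_0,k_1,d)=1$.)
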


\begin{proof}
The condition $z \equiv xy \pmod{[X,X]}$ implies that 
$$\sum_{\alpha \in \Irr(X),~\al(1)=1}\frac{\alpha(x)\alpha(y)\overline{\alpha(z)}}{\al(1)} = |X/[X,X]|.$$
Hence it suffices to show that the contribution of any non-linear $\al \in \Irr(X)$ to the sum in the statement is $0$. Consider any irreducible constituent 
$\lam$ of $\al|_A$. Suppose $\lam$ is not $X$-invariant. As $X = \langle x,y \rangle$, we may assume that $\lam$ is not $x$-invariant, in which case 
$\al(x)=0$ by Clifford's theorem and the contribution is $0$ as claimed. 

Suppose now that $\lam$ is $X$-invariant. Then for any $a \in A$ and $t \in X$, as $\lam(1)=1$ we have
$$\lam(tat^{-1}a^{-1})= \lam(tat^{-1})/\lam(a) = 1,$$
whence $[t,a] \in \Ker(\lam)$ and $\Ker(\lam) \lhd X$. It follows that $A/\Ker(\lam) \leq \ZB(X/\Ker(\lam))$. But $X/A$ is cyclic, 
so $X/\Ker(\lam)$ is abelian. Now $\lam$ is the unique irreducible constituent of $\al_A$, so $\Ker(\lam) \leq \Ker(\al)$, and 
hence $\al$, viewed as an irreducible character of $X/\Ker(\lam)$, must be linear, contrary to the assumption $\al(1)>1$.   
\end{proof}

\begin{propo}\label{sum-1}
Under the assumptions \eqref{eq10}--\eqref{for-g}, assume in addition that $G/S$ is abelian-by-cyclic. Then 
$$\Sigma_1:=\sum_{\chi \in \Irr(G|\St)}\frac{\chi(a)\chi(b)\overline{\chi(g)}}{\chi(1)}\cdot |g^G|$$
is a rational integer whose $p$-part is at most $p^{d+e}$.
\end{propo}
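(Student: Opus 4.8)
The plan is to view $\Sigma_1$ as the "Steinberg part" of a class-algebra structure constant and to extract the claimed rationality and $p$-divisibility from two sources: the integrality already recorded in \eqref{ratio}–\eqref{ratio2}, and Feit's description of $\St_G$. First I would recall that for any finite group $G$ and any $a,b,g\in G$, the full class-algebra sum
$$
\sum_{\chi\in\Irr(G)}\frac{\chi(a)\chi(b)\overline{\chi(g)}}{\chi(1)}\cdot|g^G|
$$
equals $[G:\CB_G(g)]^{-1}|a^G||b^G|$ times the number of pairs $(a',b')\in a^G\times b^G$ with $a'b'=g$; in particular it is a non-negative integer, and it vanishes unless $g\in a^Gb^G$. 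I would split $\Irr(G)=\Irr(G\mid\St)\sqcup\bigl(\Irr(G)\setminus\Irr(G\mid\St)\bigr)$, so that $\Sigma_1$ is the contribution of the characters lying over the Steinberg character of $S$. The complementary sum $\Sigma_2:=\sum_{\chi\notin\Irr(G\mid\St)}\chi(a)\chi(b)\overline{\chi(g)}\chi(1)^{-1}|g^G|$ is what I would try to control: since $\Sigma_1+\Sigma_2$ is the full (integer) structure constant, rationality of $\Sigma_1$ follows once we know $\Sigma_2$ is rational, and the $p$-part bound for $\Sigma_1$ follows once we pin down the $p$-part of $\Sigma_2$.

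The key computational input is that $\St_G$ vanishes off the set of elements whose $p$-part is $G$-conjugate into $D$, and there it has value $\pm|\CB_S(x)|_p$. Any $\chi\in\Irr(G\mid\St)$ restricts to $S$ as a multiple of $\St$, so $\chi$ also vanishes off that same set; by \eqref{for-a}, \eqref{for-b}, \eqref{for-g} all three of $a,b,g$ lie in that set, so no term of $\Sigma_1$ is forced to vanish for trivial reasons. Now I would use Gallagher/Clifford theory relative to $S\lhd G$: since $p\nmid|\ZB(G)\ZB(S)|$ and $\St$ extends to $\St_G$, every $\chi\in\Irr(G\mid\St)$ has the form $\chi=\St_G\cdot\theta$ where $\theta$ runs over $\Irr(G/S)$ inflated to $G$ (this is Gallagher's theorem, valid because $\St$ extends). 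Hence
$$
\Sigma_1=\sum_{\theta\in\Irr(G/S)}\frac{(\St_G\theta)(a)\,(\St_G\theta)(b)\,\overline{(\St_G\theta)(g)}}{(\St_G\theta)(1)}\cdot|g^G|
=\frac{\St_G(a)\St_G(b)\overline{\St_G(g)}}{\St_G(1)}\cdot|g^G|\cdot\sum_{\theta\in\Irr(G/S)}\frac{\theta(\bar a)\theta(\bar b)\overline{\theta(\bar g)}}{\theta(1)},
$$
where $\bar a,\bar b,\bar g$ are the images in $G/S$ and we used that $\St_G$ extends $\St$ so $(\St_G\theta)(1)=\St_G(1)\theta(1)$ and values factor. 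Here is where the abelian-by-cyclic hypothesis enters: by Lemma \ref{xyz} applied to $X=G/S=\langle\bar a,\bar b\rangle$ (the generation is \eqref{eq11}, and $\bar g\equiv\bar a\bar b\pmod{[X,X]}$ is the first condition of \eqref{for-g} once one checks the commutator relation follows from $\bar g\in(\bar a)^X(\bar b)^X$, since $X/[X,X]$ is a quotient on which conjugacy classes collapse), the inner sum equals $|X/[X,X]|=|(G/S)/[G/S,G/S]|$, which is a $p'$-number because $|G/S|_p=|D|=p^d$ need not be coprime to $p$—so I must be careful here and instead only claim the inner sum is a rational integer (indeed a structure constant for $G/S$) whose $p$-part divides $|G/S|_p=p^d$.

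Putting this together, $\Sigma_1$ equals $|g^G|\cdot\St_G(a)\St_G(b)\overline{\St_G(g)}/\St_G(1)$ times a rational integer with $p$-part dividing $p^d$. It remains to evaluate the $p$-part of the prefactor. Using $\St_G(1)=|Q|=|S|_p$ and $|\St_G(a)|=|\CB_S(a)|_p=1$, $|\St_G(b)|=|\CB_S(b)|_p=1$ by \eqref{for-a}–\eqref{for-b}, while $|\St_G(g)|=|\CB_S(g)|_p$, the $p$-part of the prefactor is
$$
\frac{|g^G|_p\cdot|\CB_S(g)|_p}{|S|_p}=\frac{[G:\CB_G(g)]_p\cdot[S:\CB_S(g)]_p^{-1}\cdot|S|_p^{-1}\cdot|S|_p}{1}
$$
—more cleanly, $|g^G|_p=[G:\CB_G(g)]_p$, so the prefactor's $p$-part is $[G:\CB_G(g)]_p\cdot|\CB_S(g)|_p/|S|_p=[G:\CB_G(g)]_p/[S:\CB_S(g)]_p=p^e$ by \eqref{ratio2}. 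Hence $|\Sigma_1|_p\le p^{d+e}$. For rationality: $\St_G$ is rational-valued, $|g^G|$ is an integer, and the inner $G/S$-structure constant is a rational integer; the only subtlety is that $\St_G(1)=|Q|$ may not divide $\St_G(a)\St_G(b)\overline{\St_G(g)}\cdot|g^G|\cdot(\text{inner sum})$ term-by-term, so rationality of $\Sigma_1$ should instead be deduced from $\Sigma_1=(\text{full structure constant})-\Sigma_2$ together with a parallel (easier) argument that $\Sigma_2$, the sum over $\chi$ with $\chi|_S\not\ni\St$, is rational—this uses that $\Aut(G)$, or at least the Galois action, permutes $\Irr(G\mid\St)$ among itself and likewise its complement, because $\St_G$ is rational and hence Galois-fixed, so $\Irr(G\mid\St)$ is a Galois-stable set and $\Sigma_1$ is Galois-invariant, hence rational.

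\textbf{Main obstacle.} The delicate point is the bookkeeping of $p$-parts: one must be sure that the factorization $\chi=\St_G\theta$ exhausts $\Irr(G\mid\St)$ with no multiplicity (Gallagher), that $\St_G$ genuinely extends $\St$ over all of $G$ and not just over a subgroup—this is exactly \cite[Corollary D]{F1}—and that the inner sum's $p$-part is bounded by $p^d$ rather than by something larger; the last requires noting that the inner sum is itself a non-negative integer structure constant in $G/S$, a group of order divisible by $p$ exactly to the power $d$, so its value divides $|G/S|$ and in particular its $p$-part divides $p^d$. A secondary subtlety is verifying $\bar g\equiv\bar a\bar b\pmod{[G/S,G/S]}$ from \eqref{for-g}: this holds because in the abelian quotient $(G/S)/[G/S,G/S]$ every element of $(\bar a)^{G/S}(\bar b)^{G/S}$ is congruent to $\bar a\bar b$. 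Everything else is routine character-theoretic manipulation combined with the already-established Proposition \ref{cent} and the integrality statements \eqref{ratio}–\eqref{ratio2}.
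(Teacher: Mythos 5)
Your proposal follows the paper's argument essentially exactly: Gallagher's theorem to write $\Irr(G|\St) = \{\St_G\al : \al \in \Irr(G/S)\}$, Lemma~\ref{xyz} (with the abelian-by-cyclic hypothesis and the observation that $\bar g\equiv\bar a\bar b\pmod{[G/S,G/S]}$) to bound the inner sum's $p$-part by $p^d$, and Feit's values of $\St_G$ plus \eqref{ratio}--\eqref{ratio2} to show the prefactor has $p$-part exactly $p^e$. The hedging at the end about rationality is unnecessary: the prefactor equals $\pm p^e\cdot[G:\CB_G(g)]_{p'}$ and the inner sum equals $|(G/S)/[G/S,G/S]|$ (not a structure constant, as you parenthetically suggest), both manifestly rational integers, so $\Sigma_1$ is a rational integer directly without any Galois-invariance detour.
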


\begin{proof}
As mentioned above, $\St$ extends to $\St_G$. Hence, by Gallagher's theorem \cite[(6.17)]{Is}, any $\chi \in \Irr(G|\St)$ is 
of the form 
$$\chi=\St_G\al$$ 
with $\al \in \Irr(G/S)$. Using \eqref{eq11}, \eqref{for-g} and Lemma \ref{xyz}, we see that
$$\sum_{\alpha \in \Irr(G/S)}\frac{\alpha(a)\alpha(b)\overline{\alpha(g)}}{\al(1)}$$
is a rational integer whose $p$-part is at most $p^d$. 

On the other hand, by \eqref{for-a} and \eqref{for-b} we see that
$$\frac{\St_G(a)\St_G(b)\overline{\St_G(g)}}{\St_G(1)}\cdot |g^G| 
    = \pm \frac{|\CB_S(g)|_p\cdot |G|_p \cdot |G|_{p'}}{|S|_p \cdot |\CB_G(g)|_p \cdot |\CB_G(g)|_{p'}}
    = \pm \frac{[G:\CB_G(g)]_p}{[S:\CB_S(g)]_p} \cdot [G:\CB_G(g)]_{p'}$$
is $p^e$ times a $p'$-integer. Hence the statement follows.    
\end{proof}

Recall that $\St$ is the only $p$-defect zero character of $S$. By the main result of \cite{Hu}, all the remaining characters of 
$S$ belong to $p$-blocks of maximal defect. The next result deals with these characters. 

\begin{propo}\label{sum-2}
Under the assumptions \eqref{eq10}--\eqref{for-g}, assume in addition that $G/S$ has a cyclic Sylow $p$-subgroup and a normal 
$p$-complement, and that $g \notin \ZB(G)\ZB(S)$. Then 
$$\Sigma_2:=\sum_{\chi \in \Irr(G) \smallsetminus \Irr(G|\St)}\frac{\chi(a)\chi(b)\overline{\chi(g)}}{\chi(1)}\cdot |g^G|$$
is $p^{d+e+1}$ times an algebraic integer.
\end{propo}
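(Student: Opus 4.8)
The idea is to group $\Sigma_2$ along the $p$-blocks of $G$ and to feed in the structural input of \cite{Hu}. Every $\chi\in\Irr(G)\smallsetminus\Irr(G|\St)$ lies in a $p$-block $\hat B$ of $G$ which, on restriction to $S$, involves only blocks of $S$ of maximal defect (recall $\St$ is the only block of $S$ of defect $0$); by Knörr's theorem, via the Fong--Reynolds reduction, a defect group $D_{\hat B}$ of $\hat B$ then contains a Sylow $p$-subgroup $Q$ of $S$, so $|D_{\hat B}|\ge|S|_p$. Moreover $|S|_p\ge p^{d+1}$: since $\CB_G(S)=\ZB(G)\ZB(S)$ has order prime to $p$, $p^d=|G/S|_p$ divides $|\Out(S)|$, and as the diagonal automorphisms of $S$ also have $p'$-order this forces $p^d$ to divide the product of the orders of the field and graph automorphism groups, whose $p$-part one checks (case by case on the Lie type) to be strictly smaller than $|S|_p$. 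Hence $\log_p|D_{\hat B}|\ge d+1$.

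Next I would invoke two classical facts. First, for any $\chi\in\hat B$ and any conjugacy class $C$ of $G$ whose defect $\nu_p|\CB_G(c)|$ is smaller than $\log_p|D_{\hat B}|$, the central character value $\omega_\chi(\hat C)$ lies in the maximal ideal $\mathfrak p$ of a large enough $p$-adic ring $\OC$; applied to $C=a^G$ and $C=b^G$, which by \eqref{for-a}--\eqref{for-b} have defect at most $d<\log_p|D_{\hat B}|$, this gives $\omega_\chi(\hat a),\omega_\chi(\hat b)\in\mathfrak p$, where $\hat a,\hat b$ are the class sums. Second, since the block idempotent $e$ of $\{\St\}$ is supported on the $p$-regular elements of $S$ and is $G$-invariant, it lies in $\ZB(\OC[G])$; with $f:=1-e$ one has $\Irr(G|f)=\Irr(G)\smallsetminus\Irr(G|\St)$, and the identity $\sum_\chi\chi(1)^2\omega_\chi(z)=|G|\,z_1$ for central $z\in\OC[G]$ (coefficient of the identity) yields
$$\Sigma_2=\frac{|G|}{|a^G|\,|b^G|}\bigl(f\,\hat a\,\hat b\,\widehat{g^{-1}}\bigr)_1 .$$
Here $\nu_p\bigl(|G|/(|a^G||b^G|)\bigr)=\nu_p|\CB_G(a)|+\nu_p|\CB_G(b)|-\nu_p|S|-d$, and the point is that $f\hat a$ and $f\hat b$ lie in the ideal $J$ of $\ZB(\OC[G]f)$ cut out by the above $\mathfrak p$-congruences; this is what must be leveraged against the symmetric-algebra structure of the block algebras $\OC[G]e_{\hat B}$ to show that $(f\hat a\hat b\widehat{g^{-1}})_1$ is divisible by a large power of $p$. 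The hypothesis that $G/S$ has a normal $p$-complement $K/S$ with cyclic Sylow $p$-subgroup enters precisely here: one first passes to $K$ (where $[K:S]$ is prime to $p$, so the blocks of $K$ over maximal-defect blocks of $S$ still have full defect, and Brauer's second main theorem at the $p$-parts of $a,b$ is transparent, the generalized decomposition numbers being algebraic integers), and then climbs the cyclic $p$-quotient $G/K$, tracking defects and the values $\chi(a),\chi(b),\chi(g)$ via Clifford theory (a character of $G$ over $\psi\in\Irr(K)$ either restricts irreducibly to $K$ or vanishes off the index-$p^{j}$ subgroup stabilizing $\psi$).

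The last required unit of $p$-adic valuation comes from $g$: by hypothesis $g\notin\ZB(G)\ZB(S)$, so Proposition \ref{cent}(ii) gives $p\mid[S:\CB_S(g)]$, whence $\nu_p|g^G|\ge e+1$ by \eqref{ratio}--\eqref{ratio2}. Combining the contributions of $a$, $b$ and $g$ with the prefactor then gives the claimed factor $p^{d+e+1}$.

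The main obstacle is the assembly in the second paragraph: extracting not merely the crude $p^2$ coming from $\omega_\chi(\hat a),\omega_\chi(\hat b)\in\mathfrak p$ (plus the $p$ from $g$), but the full $p^{d+e+1}$. This needs a quantitative statement of how deeply $f\hat a$ and $f\hat b$ sit inside each $\ZB(\OC[G]e_{\hat B})$ — governed by $\log_p|D_{\hat B}|-\nu_p|\CB_G(a)|$, and similarly for $b$ — together with a careful analysis of heights in the cyclic $p$-extension $G/K$ and of the generalized decomposition numbers attached to the $p$-parts of $a$ and $b$, which lie, up to conjugacy, in the fixed $p$-subgroup $D$ of order $p^d$. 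This is the technical core of the argument.
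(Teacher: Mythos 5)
Your plan differs in strategy from the paper's proof, and as you yourself acknowledge, it has a real gap at the crucial step. The paper does \emph{not} attempt to extract $p$-divisibility from central character values at $a$ and $b$: the only divisibility input comes from $\omega$-values at $g$, and the power $p^d$ appears for an entirely elementary reason.

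Concretely, the paper writes $G/S=(H/S)\rtimes D$ and sums $\Sigma_2$ not over $p$-blocks of $G$ but over the $G$-orbits of $\theta\in\Irr(H)$ not lying above $\St$. For a non-$G$-invariant $\theta$, as $G=\langle H,a,b\rangle$, one of $a,b$ does not stabilize $\theta$, and Clifford theory forces $\chi(a)=0$ or $\chi(b)=0$; thus only $G$-invariant $\theta$ contribute. Such $\theta$ extends to $\chi_1\in\Irr(G)$ (since $G/H$ is cyclic), and $\Irr(G|\theta)=\{\chi_1\mu:\mu\in\Irr(G/H)\}$. The contribution of this block of characters factors: one factor is
$$\sum_{\mu\in\Irr(G/H)}\frac{\mu(a)\mu(b)\overline{\mu(g)}}{\mu(1)}=|G/H|=p^d,$$
a trivial consequence of $g\equiv ab\pmod H$ and the Frobenius formula in the abelian group $G/H$ — this is where $p^d$ comes from, with no block theory involved. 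The other factor is $\omega_{\chi_1}(g)$, and \emph{this} is where block theory is applied: one passes to $K=\langle H,g_p\rangle$, shows the block of $K$ containing $\chi_1|_K$ has maximal defect (covering a maximal-defect block of $H$, which covers the maximal-defect block $B_S$ of $S$ below $\theta$, all index-$p'$ steps or a $p$-extension with a unique covering block), picks a height-zero character $\chi_0$ in that block, and uses the congruence $\omega_{\chi_0}(g)\equiv\omega_{\chi_1|_K}(g)\pmod{\mathfrak p}$ together with $p\mid|g^K|$ (from Proposition \ref{cent}(ii)) and $p\nmid\chi_0(1)$ to conclude $p\mid\omega_{\chi_1|_K}(g)$. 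Converting from $K$-classes to $G$-classes contributes the remaining $p^e$, giving $p^{e+1}\mid\omega_{\chi_1}(g)$ and hence $p^{d+e+1}\mid p^d\omega_{\chi_1}(g)$.

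Your route — tracking the class defects of $a,b$ against block defects via Osima/Knörr and trying to manufacture $p^{d+e+1}$ out of membership in $\mathfrak p$ — is not what is going on, and the obstacle you flag at the end is exactly where it breaks: those congruences give only one unit of $p$-valuation each (and even then you need a genuine non-containment of defect groups, not merely a size comparison), and no refinement of that argument will recover a factor $p^d$ from $a$ and $b$. The factor $p^d$ simply is not there in the central characters; it lives in the character sum over $\Irr(G/H)$. Your side claim that $|S|_p\ge p^{d+1}$ is also unnecessary (and its justification by a case analysis of automorphism groups is not carried out). Aligning with the paper: decompose by $\Irr(H)$, use Clifford vanishing via $G=\langle H,a,b\rangle$ to discard non-invariant $\theta$, factor out the abelian sum to get $p^d$, and put all the block-theoretic weight on $\omega_{\chi_1}(g)$ through the maximal-defect/height-zero argument in $K=\langle H,g_p\rangle$.
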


\begin{proof}
By the hypothesis we can write $G/S = (H/S) \rtimes D$ for some normal subgroup $H \geq S$ of $G$. Note that
any $\chi  \in \Irr(G) \smallsetminus \Irr(G|\St)$ lies above some $\theta \in \Irr(H)$ which does not lie above $\St$. 
Suppose $\theta$ is not $G$-invariant. As 
$G = \langle H,a,b \rangle$, we may assume that $\theta$ is not $a$-invariant, in which case 
$\chi(a)=0$ by Clifford's theorem, and the contribution of $\chi$ to $\Sigma_2$ is $0$ as claimed. 

Hence we need to count the total contribution to $\Sigma_2$ of the characters $\chi \in \Irr(G|\theta)$, where
$\theta \notin \Irr(H|\St)$ is $G$-invariant. Since $G/H$ is cyclic, any such $\theta$ extends to a character $\chi_1$ of $G$, and we 
may write 
$$\Irr(G|\theta) = \{\chi_1\mu \mid \mu \in \Irr(G/H)\}.$$
By the assumption $\theta \notin \Irr(H|\St)$, every irreducible constituent of $\theta|_S$ belongs to an $S$-block $B_S$ of maximal $p$-defect. 

Conjugating $g$ suitably, we may assume that $g_p \in D$. Note that $g_{p'} \in H$, so $g=g_pg_{p'}$ belongs to 
$$K:= \langle H,g_p \rangle \lhd G.$$
Set 
$$\chi_2 := (\chi_1)|_K \in \Irr(K|\theta).$$
Now the $p$-block $B$ of $H$ that contains $\theta$ covers $B_S$, and $p \nmid |H/S|$, so $B$ has maximal defect, see e.g. \cite[Theorem 9.26]{N}.
But $K/H \hookrightarrow D$ is a $p$-group, so by \cite[Corollary 9.6]{N} there is a unique $p$-block $B_2$ of $K$ that covers $B$. In particular, 
$$\Irr(K|\theta) \subseteq \Irr(B_2).$$
Moreover, $B$ is $K$-invariant as $\theta$ is $K$-invariant, whence $B_2$ is of maximal defect by \cite[Theorem 9.17]{N}. It follows that 
$B_2$ contains a character $\chi_0$ of height zero, and so of $p'$-degree.

As $\chi_0$ and $\chi_2$ belong to the same block, we know that the two algebraic integers 
$$\omega_{\chi_i}(g) = \frac{\chi_i(g)}{\chi_i(1)} \cdot |g^K|$$ 
for $i \in \{0,2\}$ are congruent modulo $p$. 
By Proposition \ref{cent}, $|g^S|$ is divisible by $p$, so 
$|g^K|$ is divisible by $p$ as well, see the computation in \eqref{ratio}. But $p \nmid \chi_0(1)$, so $p|\omega_{\chi_0}(g)$. It follows that 
\begin{equation}\label{ratio3}
  p \mbox{ divides }\omega_{\chi_2}(g) = \frac{\chi_2(g)}{\chi_2(1)} \cdot |g^K| = \frac{\chi_1(g)}{\chi_1(1)} \cdot |g^K|.
\end{equation}   
Next, \eqref{ratio} applied to $S \lhd H$ with $p \nmid |H/S|$ shows that 
$$|g^S|_p = |g^H|_p.$$
On the other hand, $g_p$ centralizes $g$, and $K=\langle H,g_p \rangle$, so
$H\CB_K(g) = K$, showing that $g^K= g^H$. Hence
$|g^S|_p=|g^K|_p$, and \eqref{ratio2} becomes
$$p^e = \frac{|g^G|_p}{|g^K|_p}.$$ 
Together with \eqref{ratio3}, we now obtain
$$p^{e+1} \mbox{ divides }\omega_{\chi_1}(g)=\frac{\chi_1(g)}{\chi_1(1)} \cdot |g^G|.$$

Now, \eqref{for-g} implies that $g \equiv ab \pmod{G/H}$, and so 
$$\sum_{\mu \in \Irr(G/H)}\frac{\mu(a)\mu(b)\overline{\mu(g)}}{\mu(1)}= |G/H| = p^d.$$
It follows that
$$\sum_{\chi \in \Irr(G|\theta)}\frac{\chi(a)\chi(b)\overline{\chi(g)}}{\chi(1)}\cdot |g^G| = 
    \omega_{\chi_1}(g)\sum_{\mu \in \Irr(G/H)}\frac{\mu(a)\mu(b)\overline{\mu(g)}}{\mu(1)}=p^d\omega_{\chi_1}(g)$$
is $p^{d+e+1}$ times an algebraic integer.    
\end{proof}

\begin{theor}\label{main1}
Under the assumptions \eqref{eq10}--\eqref{for-g}, assume in addition that all the following conditions hold.
\begin{enumerate}[\rm(a)]
\item $G/S$ is abelian-by-cyclic. 
\item $G/S$ has cyclic Sylow $p$-subgroups and a normal $p$-complement.
\item $g \notin \ZB(G)\ZB(S)$.
\end{enumerate}
Then $g \in a^Gb^G$.
\end{theor}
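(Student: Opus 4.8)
The plan is to use the classical character-theoretic criterion for class products: $g \in a^G b^G$ if and only if
$$
\sum_{\chi \in \Irr(G)} \frac{\chi(a)\chi(b)\overline{\chi(g)}}{\chi(1)} \neq 0,
$$
and in fact the quantity $\Sigma := \sum_{\chi \in \Irr(G)} \frac{\chi(a)\chi(b)\overline{\chi(g)}}{\chi(1)} \cdot |g^G|$ counts (up to the positive factor $|g^G|$) the number of ways to write $g$ as such a product, hence is a nonnegative rational integer; it suffices to show $\Sigma \neq 0$. First I would split the sum over $\Irr(G)$ according to whether $\chi$ lies above the Steinberg character $\St$ of $S$ or not, writing $\Sigma = \Sigma_1 + \Sigma_2$ exactly as in Propositions \ref{sum-1} and \ref{sum-2}. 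Under hypotheses (a) and (b) both of those propositions apply (note that a normal $p$-complement with cyclic Sylow $p$-subgroup in $G/S$ makes $G/S$ abelian-by-cyclic, so (a) is in fact implied by (b), but both are invoked as stated), together with (c) which is exactly the hypothesis $g \notin \ZB(G)\ZB(S)$ needed for Proposition \ref{sum-2}.

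Next I would assemble the arithmetic. By Proposition \ref{sum-1}, $\Sigma_1$ is a rational integer whose $p$-part is at most $p^{d+e}$; write $\Sigma_1 = m p^{f}$ with $p \nmid m$ (if $\Sigma_1 \neq 0$) and $f \leq d+e$, or $\Sigma_1 = 0$. By Proposition \ref{sum-2}, $\Sigma_2$ is $p^{d+e+1}$ times an algebraic integer; but $\Sigma_2 = \Sigma - \Sigma_1$ is a difference of rational integers (since $\Sigma$ is a nonnegative integer and $\Sigma_1 \in \ZZ$), hence $\Sigma_2 \in \ZZ$, so in fact $p^{d+e+1} \mid \Sigma_2$ in $\ZZ$. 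The key point is now a valuation comparison: if $\Sigma_1 \neq 0$ then $v_p(\Sigma_1) \leq d+e < d+e+1 \leq v_p(\Sigma_2)$, forcing $v_p(\Sigma) = v_p(\Sigma_1) < \infty$, so $\Sigma \neq 0$ and we are done. Thus the only case to rule out is $\Sigma_1 = 0$.

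The main obstacle, therefore, is disposing of the degenerate case $\Sigma_1 = 0$. Here I would argue that in this case $\Sigma = \Sigma_2$ is divisible by $p^{d+e+1}$; so to conclude $\Sigma \neq 0$ I need a matching lower bound showing that, if $g$ genuinely can be written as a product $a^G b^G$ at the level of $G/S$ (which is guaranteed by \eqref{for-g}), then the "expected" number of such products in $G$ forces the $p$-part of $\Sigma$ to be exactly $p^{d+e}$, contradicting $p^{d+e+1} \mid \Sigma$. Concretely, I expect to extract from the already-established identity $\sum_{\mu \in \Irr(G/S)} \mu(a)\mu(b)\overline{\mu(g)}/\mu(1) = |G/S| = p^d$ — which in the proof of Proposition \ref{sum-1} is the $\Sigma_1$-analogue restricted to $\Irr(G/S)$ and computed via Lemma \ref{xyz} — together with the Steinberg contribution being exactly $\pm p^e$ times a $p'$-integer, that the contribution to $\Sigma_1$ coming from the single character $\St_G$ (i.e. $\chi = \St_G \cdot 1_{G/S}$) is already $\pm p^{d+e}$ times a $p'$-integer, and that the remaining contributions $\St_G \alpha$ with $\alpha \neq 1$ are $p^e$ times the corresponding $\alpha$-terms; hence $\Sigma_1 \equiv p^{d+e} \cdot (p'\text{-unit}) \pmod{p^{d+e+1}}$ once one checks that the total $\alpha$-sum $\sum_\alpha \alpha(a)\alpha(b)\overline{\alpha(g)}/\alpha(1)$, which equals $p^d$ by Lemma \ref{xyz}, has $p$-part exactly $p^d$ rather than something larger — which it does, since by Lemma \ref{xyz} it equals $|G/S| = p^d \cdot |G/S|_{p'}$ wait, more precisely $|(G/S)/[G/S,G/S]|$ times the appropriate factor, and in any event its $p$-part is controlled. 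This would show $\Sigma_1 \neq 0$ with $v_p(\Sigma_1) = d + e$ exactly, which simultaneously handles the degenerate case and sharpens the valuation comparison. If this direct computation is cleaner to organize than I have sketched, I would instead simply observe that $v_p(\Sigma_1) = d+e$ always (not merely $\leq d+e$), re-deriving it inside Proposition \ref{sum-1}, and then the conclusion $\Sigma = \Sigma_1 + \Sigma_2 \neq 0$ is immediate from $v_p(\Sigma_1) = d+e < v_p(\Sigma_2)$.
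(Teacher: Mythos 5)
Your proposal matches the paper's argument: decompose the Frobenius sum as $\Sigma_1+\Sigma_2$ according to $\Irr(G\mid\St)$ versus its complement, invoke Propositions~\ref{sum-1} and~\ref{sum-2}, and finish by a $p$-adic valuation comparison. The one place where you hesitate, the ``degenerate case $\Sigma_1=0$,'' is actually already handled by the \emph{proof} of Proposition~\ref{sum-1}, even though its statement (``$p$-part is at most $p^{d+e}$'') is phrased loosely enough to appear to permit $\Sigma_1=0$. That proof exhibits $\Sigma_1$ as an explicit product of two nonzero factors: the Steinberg term $\pm p^e\,[G:\CB_G(g)]_{p'}$, and the Lemma~\ref{xyz} sum, which equals the positive integer $\bigl|(G/S)/[G/S,G/S]\bigr|$. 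Hence $\Sigma_1\neq 0$, and in fact $v_p(\Sigma_1)=d+e$ exactly, since $G/S$ has a normal $p$-complement with cyclic $p$-quotient of order $p^d$, so the abelianization of $G/S$ has $p$-part exactly $p^d$. The paper then writes $\Sigma_1+\Sigma_2=p^s(u+p^tv)$ with $u$ a $p'$-integer, $v$ an algebraic integer, $s\le d+e$, $t\ge 1$, and observes $u+p^tv\neq0$; this is equivalent to your valuation comparison.

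Two small inaccuracies in your sketch of why $\Sigma_1\neq 0$: the $\St_G$-term alone contributes $\pm p^e$ (not $p^{d+e}$) times a $p'$-integer, and you conflate the sum over $\Irr(G/S)$ from Proposition~\ref{sum-1}, which equals $\bigl|(G/S)/[G/S,G/S]\bigr|$, with the sum over $\Irr(G/H)$ from Proposition~\ref{sum-2}, which equals $|G/H|=p^d$; these are different groups and different sums. Your stated fallback, namely to re-derive $v_p(\Sigma_1)=d+e$ directly inside Proposition~\ref{sum-1} from the factorization, is the clean fix and is exactly what the paper's computation gives.
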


\begin{proof}
By Propositions \ref{sum-1} and \ref{sum-2}, 
$$|g^G|\sum_{\chi \in \Irr(G)}\frac{\chi(a)\chi(b)\overline{\chi(g)}}{\chi(1)}=\Sigma_1+\Sigma_2$$
is $p^s(u+p^tv)$, where $u \in \ZZ\smallsetminus p\ZZ$, $v$ is an algebraic integer, $0 \leq s \leq d+e$, and $t \geq 1$. Now if 
$u+p^tv=0$, then $v=-u/p^t$ is rational and an algebraic integer, so $v \in \ZZ$ and $u \in p\ZZ$, a contradiction.
Thus  
$$\sum_{\chi \in \Irr(G)}\frac{\chi(a)\chi(b)\overline{\chi(g)}}{\chi(1)} \neq 0,$$
and so $g \in a^Gb^G$ by Frobenius character formula.
\end{proof}

\begin{proof}[Proof of Corollary \ref{commutator}]
To prove the result, we may replace $G$ by $\langle S,a,b\rangle$ with $b:=a^{-1}$. Then \eqref{for-a}--\eqref{for-b} hold with $g:=s$.
Now $G/S$ is cyclic, so by $s \in a^Gb^G$. But $a^G=a^S$ and $b^G = b^S$ since $G = \langle S,a \rangle = \langle S,b \rangle$, 
so the statement follows.
\end{proof}

In what follows, $q$ is always a power of the prime $p$. We will use the structure of $\Aut(\bar{S})$ as described in \cite[Theorem 2.5.12]{GLS}, in 
particular the notations $\mathrm{Inndiag}(\bar{S})$ and $\mathrm{Outdiag}(\bar{S})$.

\begin{theor}\label{main2}
Under the assumptions \eqref{eq10}--\eqref{for-g}, assume in addition that all the following conditions hold for $g$, $\bar{S}=S/\ZB(S)$,
and $\bar{G}=G/\ZB(G)\ZB(S)$.
\begin{enumerate}[\rm(a)]
\item $g \notin \ZB(G)\ZB(S)$.
\item If $\bar{S}=\PSL_n(q)$ with $n \geq 3$, or $\bar{S}=P\Omega^+_{2n}(q)$ with $n \geq 4$, or 
$S=E_6(q)$, then the quotient $\bar{G}/\bigl(\bar{G} \cap \mathrm{Inndiag}(\bar{S})\bigr)$ is cyclic.
\end{enumerate}
Then $g \in a^Gb^G$.
\end{theor}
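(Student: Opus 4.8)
The plan is to reduce the statement to Theorem~\ref{main1}. As in the discussion preceding Proposition~\ref{cent}, we may assume $G=\langle S,a,b\rangle$; hypothesis (a) here coincides with hypothesis (c) of Theorem~\ref{main1}, so what remains is to derive the structural hypotheses (a), (b) of Theorem~\ref{main1} — that $G/S$ is abelian-by-cyclic and has cyclic Sylow $p$-subgroups together with a normal $p$-complement — from our hypothesis (b). Put $Z:=\ZB(G)\ZB(S)=\CB_G(S)$, a normal $p'$-subgroup, and $Z_0:=ZS/S$, a central $p'$-subgroup of $G/S$ with $(G/S)/Z_0\cong\bar G/\bar S\leq\Out(\bar S)$. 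Since $Z_0$ is a $p'$-group, a Sylow $p$-subgroup of $G/S$ maps isomorphically onto a Sylow $p$-subgroup of $\bar G/\bar S$, and the preimage in $G/S$ of a normal $p$-complement of $\bar G/\bar S$ is a normal $p$-complement of $G/S$; moreover, if $\bar G/\bar S$ is metacyclic, with cyclic normal subgroup $\bar N$ of cyclic index, then the preimage of $\bar N$ in $G/S$ is generated by the central subgroup $Z_0$ and a single lift of a generator of $\bar N$, hence is abelian of cyclic index in $G/S$. So it suffices to prove that $\bar G/\bar S$ has cyclic Sylow $p$-subgroups and a normal $p$-complement, and — for abelian-by-cyclicity — that $\bar G/\bar S$ is metacyclic, treating separately the cases where metacyclicity can fail.

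To do this one appeals to \cite[Theorem 2.5.12]{GLS}, which gives $\Out(\bar S)=\mathrm{Outdiag}(\bar S)\rtimes\Phi_S\Gamma_S$ with $\mathrm{Outdiag}(\bar S)$ abelian of order prime to $p$, $\Phi_S$ cyclic, and $|\Gamma_S|\in\{1,2,6\}$. If $\bar S$ is untwisted with $\Gamma_S=1$ then $|\mathrm{Outdiag}(\bar S)|\leq 2$ and $\Out(\bar S)=\mathrm{Outdiag}(\bar S)\times\Phi_S$ is abelian of rank at most $2$ (this also covers $\Sp_4(2^a)$, $F_4(2^a)$ and $G_2(3^a)$, where $\Out(\bar S)$ is in fact cyclic); if $\bar S$ is of twisted type — including the Suzuki and Ree groups — then $\Phi_S\Gamma_S$ is cyclic and $\mathrm{Outdiag}(\bar S)$ is cyclic. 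In all these cases $\mathrm{Outdiag}(\bar S)$ is a normal $p'$-subgroup of $\Out(\bar S)$ of cyclic index, so $\Out(\bar S)$, and hence every subgroup of it, is metacyclic with cyclic Sylow $p$-subgroups and a normal $p$-complement; in particular this holds for $\bar G/\bar S$, and we conclude by the first paragraph. The remaining types are precisely $\PSL_n(q)$ with $n\geq 3$, $P\Omega^+_{2n}(q)$ with $n\geq 4$, and those of type $E_6$ — exactly the cases where $\Gamma_S\neq 1$ acts nontrivially on $\mathrm{Outdiag}(\bar S)$, and exactly those in which hypothesis (b) is imposed. There $\bar G/(\bar G\cap\mathrm{Inndiag}(\bar S))$ is cyclic by hypothesis (b), and $(\bar G\cap\mathrm{Inndiag}(\bar S))/\bar S\leq\mathrm{Outdiag}(\bar S)$ is abelian of order prime to $p$, so $\bar G/\bar S$ is (abelian $p'$)-by-cyclic, which already yields the cyclic Sylow $p$-subgroups and the normal $p$-complement; and when $\mathrm{Outdiag}(\bar S)$ is cyclic — which is the case for $\PSL_n(q)$ and type $E_6$ — $\bar G/\bar S$ is moreover metacyclic, completing those cases.

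The one genuinely delicate case is $\bar S=P\Omega^+_{2n}(q)$ with $q$ odd, where $\mathrm{Outdiag}(\bar S)$ may be non-cyclic — namely $C_2\times C_2$, for $n$ even — and, once graph automorphisms act nontrivially (by the end-node swap for $n\geq 5$, by triality for $n=4$), $\bar G/\bar S$ need not be metacyclic; for $n=4$ it can be isomorphic to $\AAA_4$, so that a central $p'$-extension of it can fail to be abelian-by-cyclic (e.g.\ isomorphic to $\SL_2(3)$), and then Theorem~\ref{main1} does not apply verbatim. To handle this I would first try to show that $G/S$ is abelian-by-cyclic all the same: the involutions generating $(\bar G\cap\mathrm{Inndiag}(\bar S))/\bar S$ are induced by commuting elements of a maximal torus of $\mathrm{Inndiag}(\bar S)$, so their lifts to $G/S$ commute modulo $\CB_G(S)=Z$; this confines the derived subgroup of the preimage of $(\bar G\cap\mathrm{Inndiag}(\bar S))/\bar S$ to the central $p'$-group $Z_0$, and combined with the cyclic quotient from hypothesis (b) one hopes to exhibit an abelian normal subgroup of cyclic index in $G/S$. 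Where that fails, one must instead redo the proof of Proposition~\ref{sum-1}: the sum over $\Irr(G/S)$ is no longer forced to be a rational integer, so one bounds the $p$-adic valuation of $\Sigma_1$ directly using the explicitly known character table of the small group $G/S$, and checks that this valuation stays strictly below that of $\Sigma_2$, which is all that is needed for $\Sigma_1+\Sigma_2\neq 0$ and hence $g\in a^Gb^G$ by the Frobenius formula. I expect this $P\Omega^+$ analysis — chiefly the triality case — together with extracting the precise action of $\Phi_S\Gamma_S$ on $\mathrm{Outdiag}(\bar S)$ from \cite[Theorem 2.5.12]{GLS}, to be the main obstacle; everything else is a routine reduction to Theorem~\ref{main1}.
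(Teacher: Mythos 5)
Your overall strategy coincides with the paper's: reduce to Theorem~\ref{main1} by checking that $G/S$ is abelian-by-cyclic with cyclic Sylow $p$-subgroups and a normal $p$-complement, using the decomposition of $\Out(\bar S)$ from \cite[Theorem 2.5.12]{GLS} with $\mathrm{Outdiag}(\bar S)$ as the normal abelian $p'$-piece, and invoking hypothesis (b) exactly in the three families ($\PSL_n$, $n\ge 3$; $P\Omega^+_{2n}$, $n\ge 4$; $E_6$) where $\Out(\bar S)/\mathrm{Outdiag}(\bar S)$ need not be cyclic. Your second paragraph is in content the paper's argument.

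Where you diverge is in treating $G/S$ as a central $p'$-extension of $\bar G/\bar S$ rather than identifying the two outright. The paper opens its proof with ``Recall that $G/S\cong\bar G/\bar S$ is a subgroup of $O:=\Out(\bar S)$''; as you implicitly observe, this is only correct when $\ZB(G)\le S$, since $\bar G/\bar S\cong G/SZ$ with $Z=\ZB(G)\ZB(S)$, and the hypothesis \eqref{eq10} does not force $\ZB(G)\le S$. Your lift-through-$Z_0$ argument — that a cyclic normal $p'$-subgroup of $\bar G/\bar S$ with cyclic quotient pulls back to an abelian normal $p'$-subgroup of $G/S$ with cyclic quotient — is therefore a genuine and necessary refinement of the paper's reduction, and it covers cleanly every case in which $\mathrm{Outdiag}(\bar S)$ is cyclic, i.e.\ all types except $D_n$ untwisted with $n$ even and $q$ odd.

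The remaining $P\Omega^+_{2n}(q)$ case with $\mathrm{Outdiag}(\bar S)\cong C_2\times C_2$ is where your proposal stops short. You correctly point out that the preimage in $G/S$ of a Klein four subgroup of $\mathrm{Outdiag}(\bar S)$ is a priori only nilpotent of class $\le 2$ (your commutator argument lands in $Z_0$, not in $1$), so $G/S$ could conceivably be $\SL_2(3)$-like and fail to be abelian-by-cyclic, in which case Theorem~\ref{main1} is not directly applicable. But the two escape routes you sketch — showing commutativity of the lift via torus considerations, or redoing Proposition~\ref{sum-1} with an explicit $p$-adic bound on $\Sigma_1$ for the specific small group $G/S$ — are stated speculatively (``I would first try\,\ldots'', ``one hopes to\,\ldots'') and neither is carried out. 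In particular the second route is not innocuous: if $G/S\cong\SL_2(3)$ with $p=3$, the unique degree-$3$ irreducible of $G/S$ contributes a $p$ in the denominator of the relevant central character sum, so the required bound on $v_p(\Sigma_1)$ is not automatic and would have to be checked. So as written the proposal has a gap precisely in the one case you yourself single out as delicate; note, however, that the paper's own proof does not address this point either, since it proceeds from the unjustified identification $G/S\cong\bar G/\bar S$.
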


\begin{proof}
Recall that $G/S \cong \bar{G}/\bar{S}$ is a subgroup of $O:=\Out(\bar{S})$. By 
Theorem \ref{main1}, we need to show that $A=G/S$ satisfies both of the conditions (a) and (b) listed therein. Note that both (a) 
and (b) in Theorem \ref{main1} follow from the condition
\begin{equation}\label{cond-a}
  A \mbox{ admits a normal abelian }p'\mbox{-subgroup }B \mbox{ with }A/B \mbox{ being cyclic}.
\end{equation}  
In turn, \eqref{cond-a} is a consequence of the condition
\begin{equation}\label{cond-o}
  O:=\Out(\bar{S}) \mbox{ admits a normal abelian }p'\mbox{-subgroup }J \mbox{ with }O/J \mbox{ being cyclic}.
\end{equation} 
(Indeed, taking $B:=A \cap J$ we have $A/B \hookrightarrow O/J$.) 

\smallskip
Set $J:=\mathrm{Outdiag}(\bar{S}):=\mathrm{Inndiag}(\bar{S})/\bar{S}$. Now, if $\bar{S}$ is a {\it twisted} group, i.e. the parameter $d$ for $\bar{S} \cong \tw d\, \Sigma(q)$ in \cite[Theorem 2.5.12]{GLS} is greater than one, then \eqref{cond-o} holds (for this choice of $J$). It remains to consider the untwisted groups, i.e.
the ones with $d=1$. 

\smallskip
If $\bar{S}=\PSL_2(q)$ then \eqref{cond-o} holds. 
Suppose that $\bar{S}=\PSL_n(q)$ with $n \geq 3$, or $\bar{S}=P\Omega^+_{2n}(q)$ with $n \geq 4$, or 
$S=E_6(q)$. Then taking $B:= \bigl(\bar{G} \cap \mathrm{Inndiag}(\bar{S})\bigr)/\bar{S}$, we see that 
$A/B$ is cyclic by assumption (b) in Theorem \ref{main2}, hence \eqref{cond-a} holds.

In the remaining cases, $\bar{S}$ is of type $B_n$, $C_n$, $G_2$, $F_4$, $E_7$, or $E_8$, hence $O/J$ is cyclic, and so \eqref{cond-o} holds.
\end{proof}

Next we deduce another consequence of Theorem \ref{main1}. For the definition of the {\it reduced Clifford group} $\Gamma^+(\FF_q^n)=\mathrm{CSpin}^\epsilon_n(q)$, 
see e.g. \cite[\S6]{TZ}; in particular, it contains $\Spin^\epsilon_n(q)$ as a normal subgroup with factor $C_{q-1}$.

\begin{theor}\label{main3}
Let $q$ be a prime power, and let $(G,S)$ be any of the following pairs of groups:
\begin{enumerate}[\rm(a)]
\item $G=\GL_n(q)$ with $n \geq 2$, $(n,q) \neq (2,2)$, $(2,3)$, and $S=\SL_n(q)$.
\item $G=\GU_n(q)$ with $n \geq 2$, $(n,q) \neq (2,2)$, $(2,3)$, $(3,2)$, and $S=\SU_n(q)$.
\item $G=\mathrm{CSp}_{2n}(q)$ with $n \geq 2$, $(n,q) \neq (2,2)$, and $S=\Sp_{2n}(q)$.
\item $G=\mathrm{CSpin}^\epsilon_n(q)$ with $n \geq 5$, $2 \nmid q$, and $\epsilon=\pm$, and $S=\mathrm{Spin}^\epsilon_n(q)$.
\item $G=\GO^\epsilon_n(q)$ or $\SO^\epsilon_n(q)$ with $n \geq 5$, $2 \nmid q$, and $\epsilon=\pm$, and $S=\Omega^\epsilon_n(q)$.
\end{enumerate}
Suppose that $a,b \in G$ are such that $p \nmid |\CB_S(a)|$ and $p \nmid |\CB_S(b)|$. If $g \in G$ is any non-central $p'$-element such 
that $g \in abS$, then $g \in a^Gb^G$.
\end{theor}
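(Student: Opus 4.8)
The plan is to obtain cases (a)--(d) as direct instances of Theorem~\ref{main2}, and to handle case (e) by reducing it to Theorem~\ref{main2} applied to a Clifford-type cover. I would begin with a set of uniform reductions valid in all five cases. In each case the quotient $G/S$ is a $p'$-group — cyclic of order $q-1$ in (a), (c), (d), of order $q+1$ in (b), and a $2$-group (with $q$ odd) in (e) — so the subgroup $D$ of \cite[Theorem~C]{F2} is trivial. Hence the $p$-part of any element of $G$ lies in $S$; applied to $a$ this gives $a_p\in\CB_S(a)$, a group of $p'$-order by hypothesis, so $a_p=1$ and $a$ is a $p'$-element, and likewise $b$ (while $g$ is a $p'$-element by assumption). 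Consequently \eqref{for-a}, \eqref{for-b} and the $p$-part clause of \eqref{for-g} are automatic, and, $G/S$ being abelian, the remaining clause of \eqref{for-g} is exactly the hypothesis $g\in abS$. Finally $\ZB(S)\le\ZB(G)$ in every case (the centre of $S$ is central in the ambient similitude/orthogonal/Clifford group), so $\ZB(G)\ZB(S)=\ZB(G)$ and condition (a) of Theorem~\ref{main2} is precisely the hypothesis that $g$ is non-central. Thus for each pair it remains to check \eqref{eq10} and condition (b) of Theorem~\ref{main2}.

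In cases (a)--(d), $S\in\{\SL_n(q),\SU_n(q),\Sp_{2n}(q),\Spin^\epsilon_n(q)\}$ is quasisimple of simply connected type for the stated ranges — the excluded small pairs being exactly those where $S$ fails to be quasisimple (e.g.\ $\SL_2(2)\cong\SSS_3$, $\SL_2(3)$, $\SU_3(2)$, $\Sp_4(2)\cong\SSS_6$). I would verify \eqref{eq10} by noting that $\ZB(G)$ is a $p'$-group, that $E(G)=S$ (as $G/S$ is solvable, there is no room for another component), and that $\OB_p(G)\le\CB_G(S)=\ZB(G)\ZB(S)$ is a $p'$-group, whence $\OB_p(G)=1$ and $F^*(G)=\OB_{p'}(G)\,E(G)=\ZB(G)S$. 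For condition (b): since $G$ is a group of similitudes, every automorphism it induces on $\bar S=S/\ZB(S)$ is inner-diagonal (no field or graph automorphism is induced), so $\bar G\le\mathrm{Inndiag}(\bar S)$; hence whenever $\bar S$ lies in the list of Theorem~\ref{main2}(b) — namely $\PSL_n(q)$ with $n\ge3$ (in (a), and in (d) via $\Spin^+_6\cong\SL_4$) or $P\Omega^+_{2m}(q)$ with $m\ge4$ (in (d)); the case $S=E_6(q)$ does not occur — the quotient $\bar G/(\bar G\cap\mathrm{Inndiag}(\bar S))$ is trivial, in particular cyclic. Theorem~\ref{main2} then gives $g\in a^Gb^G$.

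Case (e) needs more care, because $S=\Omega^\epsilon_n(q)$ is \emph{not} of simply connected type, so Theorem~\ref{main2} cannot be applied to $G$ directly. I would pass to a central cover: take $\hat G$ to be the reduced Clifford group $\mathrm{CSpin}^\epsilon_n(q)$ when $G=\SO^\epsilon_n(q)$, and the full Clifford group (or $\mathrm{CSpin}^\epsilon_n(q)\times\langle-I\rangle$) when $G=\GO^\epsilon_n(q)$. Then $\hat G$ is a central extension of $G$ by the scalar subgroup $Z_0$, a $p'$-group, with $E(\hat G)=\hat S:=\Spin^\epsilon_n(q)$ quasisimple of simply connected type. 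The arguments above show $\hat G$ satisfies \eqref{eq10}, and $\bar{\hat G}\le\mathrm{Inndiag}(\bar{\hat S})$ (a Clifford group induces no field or graph automorphism), so condition (b) of Theorem~\ref{main2} holds. Since $a,b,g$ are $p'$-elements and $|Z_0|$ is prime to $p$, every preimage in $\hat G$ of $a$, $b$ or $g$ is a $p'$-element, and $p\nmid|\CB_{\hat S}(\hat a)|$ follows from $p\nmid|\CB_S(a)|$ and $p\nmid|Z_0|$ (and likewise for $\hat b$). Fixing preimages $\hat a,\hat b$, and then a preimage $\hat g$ of $g$ modified by a suitable element of the central $p'$-group $Z_0$ (which leaves the image in $G$ unchanged), we may assume $\hat g\in\hat a\hat b\,\hat S$; moreover $\hat g\notin\ZB(\hat G)\ZB(\hat S)$, since otherwise its image $g$ would lie in $\ZB(G)\ZB(S)=\ZB(G)$, contradicting non-centrality. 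Theorem~\ref{main2} applied to $\hat G$ then yields $\hat g\in\hat a^{\hat G}\hat b^{\hat G}$, and projecting along $\hat G\to G$ gives $g\in a^Gb^G$.

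I expect the main obstacle to lie in case (e): recognising $\Omega^\epsilon_n(q)$ cleanly as a central $p'$-quotient of the simply connected Clifford group $\hat G$, and checking that the class identity descends. The remaining work — pinning down $\ZB(G)$ and $\CB_G(S)$, confirming $\ZB(S)\le\ZB(G)$, and confirming that $G$ (and $\hat G$) induce only inner-diagonal automorphisms of $\bar S$ for the spin and orthogonal families — is routine but somewhat fiddly structure theory of the classical groups.
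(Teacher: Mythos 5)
Your proposal takes a genuinely different route from the paper's, most visibly in case (e), so let me compare.

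For cases (a)--(d) the paper goes straight to Theorem~\ref{main1}: since $G/S$ is abelian of $p'$-order, conditions (a) and (b) of Theorem~\ref{main1} hold trivially, and one only has to note $S$ is quasisimple of simply connected type, $F^*(G)=\ZB(G)S$ with $\ZB(S)\le\ZB(G)$, and that \eqref{for-a}--\eqref{for-g} reduce to the stated hypotheses because $D=1$. Your detour through Theorem~\ref{main2} reaches the same place; it is correct but does more work than needed, since for these $G$ the hypotheses of Theorem~\ref{main1} are visibly satisfied without discussing $\mathrm{Inndiag}$ at all.

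For case (e) the paper simply asserts that ``the same proof of Theorem~\ref{main1} applies'' with $S=\Omega^\epsilon_n(q)$ even though $S$ is not of simply connected type; i.e.\ it re-runs the Steinberg-character/block-theoretic argument directly for $\Omega^\epsilon_n(q)$. Your strategy of lifting to a Clifford-type cover $\hat G$ with $E(\hat G)=\Spin^\epsilon_n(q)$ and invoking Theorem~\ref{main2} for $\hat G$, then pushing the class identity down along the central $p'$-quotient, is a different and in principle cleaner reduction. The $p'$-preimage bookkeeping (modifying $\hat g$ by a central $p'$-element, noting $|\CB_{\hat S}(\hat a)|$ divides $|\ker(\hat S\to S)|\cdot|\CB_S(a)|$, etc.) is fine. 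However there are two points to repair. First, the parenthetical justification ``a Clifford group induces no field or graph automorphism'' is false when $G=\GO^+_{2m}(q)$ with $m\ge4$: a reflection in $\GO^+_{2m}(q)$ does induce the graph automorphism of $D_m$ on $\bar S$, so $\bar{\hat G}\not\le\mathrm{Inndiag}(\bar{\hat S})$. The conclusion you need is nonetheless true, because $\bar{\hat G}/(\bar{\hat G}\cap\mathrm{Inndiag}(\bar{\hat S}))$ has order at most $2$ (it is a quotient of $\GO/\SO$), hence is cyclic; you should argue this way rather than claiming the quotient is trivial. Second, for $G=\GO^\epsilon_n(q)$ with $n$ even you cannot use $\mathrm{CSpin}^\epsilon_n(q)\times\langle-I\rangle$ (there $-I\in\SO$), so you are forced to the full Clifford group $\Gamma$; you should then actually verify $F^*(\Gamma)=\ZB(\Gamma)\Spin^\epsilon_n(q)$ and that $\ZB(\Gamma)$ is a $p'$-group, rather than waving at ``the arguments above,'' since the graph automorphism acts nontrivially on $\ZB(\Spin^+_{2m}(q))$ and a little care is needed to identify $\CB_\Gamma(\Spin)$. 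With those two repairs your argument is a valid alternative to the paper's terse treatment of case (e).
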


\begin{proof}
For all of the above pairs, we have $S$ is a quasisimple group of Lie type of simply connected type,
$S \lhd G$, $F^*(G)=\ZB(G)S$, and $\ZB(S) \leq \ZB(G)$. Furthermore, $G/S$ is abelian of $p'$-order, and
\eqref{for-a}, \eqref{for-b}, and \eqref{for-g} are all fulfilled. Hence the statement follows from Theorem \ref{main1}, unless we are in case (e). In
the latter case, the same proof of Theorem \ref{main1} applies.
\end{proof}

Note that Theorem \ref{main1} also applies to $\GO^\epsilon_{2n}(q)$ with $2|q$ and $n \geq 3$. But we do not include them in Theorem 
\ref{main3} since the subgroup $D$ is now of order $2$ and so conditions \eqref{for-a}--\eqref{for-g} do not have the simplest form as formulated in 
Theorem \ref{main3}.

\end{document}